\theoremstyle{definition}
\newtheorem{definition}{Definition}
\newtheorem{proposition}{Proposition}
\newtheorem{corollary}{Corollary}
\newtheorem{theorem}{Theorem}
\newtheorem{example}{Example}
\newcommand{\dontshow}[1]{}
\begin{document}

\title{Betti numbers of Toric Varieties and Eulerian Polynomials}

\author{Letitia Golubitsky}

\address{\newline
\hphantom{xxi}Department of Mathematics\newline
\hphantom{xxi}University of Western Ontario\newline
\hphantom{xxi}Middlesex College\newline
\hphantom{xxi}London, Ontario\newline
\hphantom{xxi}N6A 5B7,Canada 
}

\email{lbanu@uwo.ca}

\maketitle

\begin{abstract}
It is well-known that the Eulerian polynomials, which count permutations in $S_n$ by their number of descents, give the $h$-polynomial/$h$-vector of the simple
polytopes known as permutohedra, the convex hull of the $S_n$-orbit for a generic weight in the weight lattice of $S_n$. Therefore the Eulerian polynomials give the Betti numbers for certain smooth toric varieties associated with the
permutohedra.

In this paper we derive recurrences for the $h$-vectors of a family of polytopes generalizing this. The simple polytopes we consider arise as the orbit of a non-generic weight, namely a weight fixed by only the simple reflections $J=\{s_{n},s_{n-1},s_{n-2} \cdots,s_{n-k+2},s_{n-k+1}\}$ for some $k$ with respect to the $A_n$ root lattice. Furthermore, they give rise to certain rationally smooth toric varieties $X(J)$ that come naturally from the theory of algebraic monoids. Using effectively the theory of reductive algebraic monoids and the combinatorics of simple polytopes, we obtain a recurrence formula for the Poincar\'e polynomial of $X(J)$ in terms of the Eulerian polynomials.
\end{abstract}

\section{Introduction.}

Toric varieties and their cohomology have played an increasingly important role in studying the combinatorics of convex polytopes. They started around 1980 with Stanley's spectacular proof of the necessity of McMullen's conditions (characterizing the face numbers of a simple polytope) using the cohomology of rationally smooth projective toric varieties. This connection between the topology of toric varieties and the combinatorial geometry of convex polytopes is of interest to us.
\let\thefootnote\relax\footnotetext{{\it Date:} September 4, 2010}

Let $(W,S)$ be a finite Weyl group of type $A_n$. Let $J$ be any proper subset of $S$. Associated with $J$ is a certain projective toric variety $X(J)$. We would like to calculate the Betti numbers of $X(J)$ when $J$ is combinatorially smooth, {\it i.e.}, $X(J)$ is a rationally smooth variety.

\begin{definition} \cite{Br3}
Let $X$ be a complex algebraic variety of dimension $n$. Then $X$ is $\emph{rationally smooth at $x$}$ if there is a neighbourhood $U$ of $x$ in the complex topology such that, for any $y \in U$, $$H^m(X, X \setminus \{y\})=0, \ m \ne 2n$$ $$H^{2n}(X, X \setminus \{y\})=\mathbb{Q}.$$
Here $H^*$ denotes the cohomology of $X$ with rational coefficients.
\end{definition}

The most basic combinatorial data of a $d$-dimensional convex polytope are the numbers $f_i$ of $i$-dimensional faces encoded in the face polynomial $f(t):=\sum_{i=0}^d f_i t^i$. For simple polytopes, {\it i.e.}, where each vertex lies on exactly $d$ edges, the possible $f$-polynomials are expressed in terms of the $h$-polynomials $h(t)=f(t-1)=\sum_{i=0}^d h_i t^i$ where $h_i$ are strictly positive and satisfy the symmetry relation $h_i=h_{d-i}$. When a polytope $P$ is rational, {\it i.e.}, all its vertices have rational coordinates with respect to some lattice, we associate a toric variety to it $X_P$ using the normal fan construction. It turns out that the Poincar\'e polynomial of $X_P$ agrees with $h(t^2)$. Danilov \cite{D} proved that $X_P$ is rationally smooth if and only if the polytope $P$ is simple.

Consider a semisimple algebraic group $G_0$ with maximal torus $T_0$ and an irreducible representation $\rho_{\lambda}$ of $G_0$ with the highest weight $\lambda \in X(T_0) \otimes \mathbb{Q}$. Consider the action of $W$ on the vector space spanned by the simple roots of $G_0$ and take the convex hull of the $W$-orbit of $\lambda$, $P_{\lambda}={\rm Conv}(W.\lambda) \subset X(T_0) \otimes \mathbb Q$. Using the inner normal fan construction associated to the polytope $P_{\lambda}$ \cite{WF}, we obtain a projective toric variety $X(J)$. The terminology is justified since $X(J)$ depends only on $$J=\{s \in S  \ | \ s(\lambda)=\lambda\}.$$
In \cite{R6} Renner finds necessary and sufficient conditions for the polytope $P_{\lambda}$ to be simple using the theory of algebraic monoids  that he developed along with Putcha since 1980. For each Weyl group $(W,S)$, Renner gives a classification of all $J \subseteq S$, such that $X(J)$ is rationally smooth. See Corollary 3.5 in \cite{R6}. In this paper we are interested only in the case of $(W,S)$ finite Weyl group of type $A_n$. 
\begin{definition} \cite{R4}
We refer to $J$ $\emph{combinatorially smooth}$ if $P_{\lambda}$ is a simple polytope.\\

An equivalent definition corresponds to $J$ is $\emph{combinatorially smooth}$ if the variety $X(J)$ is rationally smooth.
\end{definition}

 According to Renner's classification, the subset $J$ is combinatorially smooth of type $A_n$ if $J \subset \{s_1,s_2, \cdots,s_n\}$ has one of the following forms:
\begin{enumerate}
\item $J_0=\emptyset$,
\item $J_0=\{s_1, \cdots ,s_i\}$ where $1 \le i \le n$,
\item $J_0=\{s_j, \cdots ,s_n\}$ where $1<j \le n$,
\item $J_0=\{s_1, \cdots ,s_i, s_j, \cdots ,s_n\}$ where $1 \le i,j \le n$ and $j-i \le 3$.
\end{enumerate}

When $(W,S)$ is finite Weyl group of type $A_n$ and $J=\emptyset$, the polytope $P_{\lambda}$ is a permutahedron. The Betti numbers of $X(\emptyset)$ are given by the Eulerian numbers. In \cite{Br}, Brenti studies the descent polynomials ({\it i.e.}, the Poincar\'e polynomials of $X(\emptyset)$) as analogues of the Eulerian polynomials.

When $J \ne \emptyset$, the weight $\lambda$ is allowed to lie on certain reflecting hyperplanes. Of course, the orbit of a point in the complement of the arrangement is just the ordinary permutahedron. In the case of $J \ne \emptyset$ combinatorially smooth subset of $S$, whether the Poincare polynomial of $X(J)$ can be expressed in terms of the Eulerian numbers is an interesting question. In this article we answer this question by computing the Poincar\'e polynomial of $X(J)$ when $(W,S)$ is finite Weyl group of type $A_n$ and $J=\{s_{n-k+1}, \cdots ,s_n\} \subseteq S$ is combinatorially smooth, with $s_k=(k, \ k+1) \in S_n$ and $1 \le k \le n$. One needs to investigate further to see whether our new technique can provide answer for all types of combinatorially smooth sets $J$.

Our first result deals with the case when the highest weight $\lambda$ is fixed only by the reflection $s_n =(n,n+1) \in S_n$. We obtain the following characterization of the $h$-polynomial of $X(J)$ in terms of the Eulerian polynomials.
\vspace{5mm}

{\bf Theorem:} Let $J=\{s_n\} \subset S$. Then $J$ is combinatorially smooth and the $h$-polynomial of $X(J)$ is given by
$$
h(t)=E_{n+1}(t)- \binom{n+1}{2} t E_{n-1}(t).
$$

Then we generalize the computations to the case of $J=\{s_{n-k+1}, \cdots ,s_n \} \subseteq S$ for $1 \le k \le n$. Our main result is a simple recursive relation for the Poincar\'e polynomial of $X(J)$ in terms of the $(n-k)$-Eulerian polynomials.

\vspace{5mm}

{\bf Theorem:} Let $J(k,n)=\{s_{n-k+1}, s_{n-k+2}, \cdots ,s_n\} \subset S$, $1 \le k \le n$ and let $h_k(t)$ denote the $h$-polynomial of $X(J(k,n))$. Then $J(k,n)$ is combinatorially smooth and the following recurrence relation holds:
$$
h_k(t)= h_{k-1}(t)- \binom{n+1}{k+1} (t^{k} +t^{k-1}+ \cdots +t)E_{n-k}(t).
$$

Finally, the recurrence relation is illustrated for $J(n-1,n)=\{s_2, s_3, s_4, \cdots ,s_n\}$ and $J(n-2,n)=\{s_3, s_4, \cdots ,s_n\}$ where the $h$-polynomial of $X(J(n-2,n))$ and $X(J(n-1,n))$ are computed in \cite{R6} and \cite{R7}.

This paper is structured as follows. In Section 2 we introduce briefly $\mathcal{J}$-irreducible monoids of type $J$ and the projective toric variety associated $X(J)$. Using the notion of cross section lattice associated to a reductive monoid we have a formula presented in Proposition 1, for the number of $i$-dimensional faces of the polytope $P_{\lambda}$ corresponding to the variety $X(J)$. This gives us a good handle of the $h$-polynomial of $X(J)$, which can be expressed in terms of all subsets of $S$. There is an interesting interplay between the geometry of $X(J)$ and the combinatorics of finite sets, illustrated in Corollary 1. In Section 3 we introduce Eulerian polynomials and prove our main results. We conclude this section with an example that illustrates the recurrence formula obtained in Theorem 6.

\section{$h$-polynomial of $X(J)$}
\vspace{5mm}

Throughout the paper we work with the field $\mathbb{C}$ of complex numbers. In this section we establish the definitions and the results needed throughout this text. A very good up-to-date account of the theory of algebraic monoids can be found in \cite{R4} and \cite{Sol}.

A linear algebraic monoid is an affine variety together with an associative morphism and an identity element. An irreducible monoid $M$ is called \emph{reductive} if its unit group $G$ is a reductive group. Let $B$ be a Borel subgroup of $G$ and $T \subset B$ a maximal torus of $G$. The set of idempotents in $\overline{T}$ is defined as $$E(\overline{T})=\{e \in \overline{T} \ | \ e^2=e\},$$where $\overline{T}$ is the Zariski closure of $T$ in $M$. The set $$\Lambda=\{e \in E(\overline{T}) \ | \ Be=eBe\}$$ is called the cross section lattice of $M$ relative to $B$ and $T$.

\begin{definition} \cite{R4}
A reductive monoid $M$ with $ 0 \in M$ is called $\mathcal{J}$-irreducible if $M -\{0\}$ has exactly one minimal $G \times G$-orbit. See section 7.3 of \cite{R4} for a systematic discussion of the important class of reductive monoids and for a proof of the following theorem.
\end{definition}

\begin{definition}
A morphism $f: X \to Y$ is a finite morphism of algebraic varieties if \mbox{$f^*:K[Y] \to K[X]$} makes $K[X]$ into a finitely generated module over $k[Y]$.
\end{definition}

\begin{theorem} \cite{R4} \label{theo: monoid} Let $M$ be a reductive monoid. The following are equivalent:

\begin{enumerate}
\item $M$ is $\mathcal{J}$-irreducible.
\item There is an irreducible rational representation $\rho:M \to End(V)$ which is finite as a morphism of algebraic varieties.
\item If $\overline{T} \subset M$ is the Zariski closure in $M$ of a maximal torus $T \subset G$ then the Weyl group $W$ of $T$ acts transitively on the set of minimal nonzero idempotents of $\overline{T}$.
\end{enumerate}
\end{theorem}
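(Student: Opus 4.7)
The plan is to establish the cycle (1) $\Rightarrow$ (3) $\Rightarrow$ (2) $\Rightarrow$ (1), using as the central dictionary the Putcha--Renner correspondence between $G \times G$-orbits in a reductive monoid and $W$-orbits of idempotents in $\overline{T}$. I would first recall that (a) every $G \times G$-orbit in $M$ meets $\overline{T}$, (b) its intersection with $\overline{T}$ is a single $W$-orbit of idempotents, and (c) the cross section lattice $\Lambda$ furnishes a canonical transversal. Under this correspondence, the partial order on $G \times G$-orbit closures matches the natural idempotent order restricted to $\Lambda$, so the minimal nonzero $G \times G$-orbits in $M - \{0\}$ are in bijection with $W$-orbits of minimal nonzero idempotents of $\overline{T}$. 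From this bijection the equivalence (1) $\Leftrightarrow$ (3) is immediate.

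The substantive step is (3) $\Rightarrow$ (2). Fix a minimal nonzero idempotent $e \in \overline{T}$; by (3), the $W$-orbit $We$ exhausts the minimal nonzero idempotents, so the two-sided ideal $MeM$ is the unique minimal nonzero $G \times G$-invariant ideal. I would read off a dominant weight $\lambda$ from the character by which $T$ acts on the line $\mathbb{C}e \subset \overline{T}$, take $V$ to be the irreducible $G$-module of highest weight $\lambda$, and construct $\rho \colon M \to \End(V)$ as the natural extension to $M$ of the $G$-action (realized via a suitable copy of $V$ inside the coordinate ring of $M$). Irreducibility is built into the construction. For finiteness, the Renner decomposition $M = \bigsqcup_{f \in \Lambda} GfG$ matches, under $\rho$, the stratification of $\overline{\rho(M)} \subset \End(V)$ by rank; the transitivity hypothesis in (3) forces every minimal nonzero idempotent to be sent to a rank-one projection, which rules out collapsing a stratum, and properness plus quasi-finiteness then follow from the affineness of $M$ and $\End(V)$ combined with a dimension count along fibers.

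The reverse direction (2) $\Rightarrow$ (1) is the easiest: $\End(V)$ is itself $\mathcal{J}$-irreducible with unique minimal nonzero orbit the rank-one operators, and a finite irreducible morphism reflects this structure, because the preimage of the minimal orbit is a nonzero $G \times G$-invariant ideal whose only candidates are unions of minimal orbits of $M$; irreducibility of $M$ and finiteness of $\rho$ force a single such orbit. The main obstacle is the construction in (3) $\Rightarrow$ (2): recovering the correct highest-weight module intrinsically from the monoid data, and upgrading what is \emph{a priori} only a closed embedding to an honest finite morphism. The transitivity in (3) is exactly what makes this possible, since without it $\rho(M)$ could acquire several minimal orbits and the irreducibility of $\rho$ would break down.
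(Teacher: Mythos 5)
First, a point of reference: the paper itself does not prove this theorem --- it is quoted from Renner's book \cite{R4} (the text explicitly defers to Section 7.3 there), so there is no internal proof to compare your argument against. Your cycle $(1)\Rightarrow(3)\Rightarrow(2)\Rightarrow(1)$ built on the Putcha--Renner correspondence between $G\times G$-orbits and $W$-orbits of idempotents of $\overline{T}$ is the standard skeleton, and the equivalence $(1)\Leftrightarrow(3)$ is indeed immediate from it.

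The genuine gaps are in $(3)\Rightarrow(2)$, at two places. First, you ``construct $\rho\colon M\to\End(V)$ as the natural extension to $M$ of the $G$-action'' on the highest-weight module $V_{\lambda}$. But a rational representation of $G$ does not automatically extend to $M$: it extends precisely when its matrix coefficients lie in $\C[M]\subset\C[G]$, equivalently when all of its $T$-weights lie in the character monoid of $\overline{T}$. Proving that the particular $\lambda$ read off from the minimal idempotent has this property is the crux of the implication and cannot be dismissed as ``built into the construction''; you should also take the distinguished minimal idempotent $e_1\in\Lambda$ (the one with $e_1B=e_1Be_1$) so that the resulting weight is dominant, since an arbitrary minimal idempotent only yields an extreme weight. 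Second, finiteness does not follow from ``affineness plus a dimension count along fibers'': a quasi-finite dominant morphism of affine varieties need not be finite. The mechanism that actually works is that $M$ and $\overline{\rho(M)}$ are affine cones contracted to $0$ by a central $\C^{*}$, and an equivariant dominant morphism of such cones is finite if and only if $\rho^{-1}(0)=\{0\}$; one then checks $\rho^{-1}(0)=\{0\}$ because it is a closed $G\times G$-stable subset, any nonzero such subset must contain the unique minimal nonzero orbit $Ge_1G$, and $\rho(e_1)\neq 0$ since $e_1$ acts as the projection onto the highest-weight line. Your $(2)\Rightarrow(1)$ sketch is essentially right once ``$\End(V)$ is $\mathcal{J}$-irreducible'' is replaced by the relevant statement that $\overline{\rho(M)}$ is: its minimal nonzero idempotents are the projections onto the extreme weight spaces of $V$, which form a single $W$-orbit exactly because $V$ is irreducible, and finiteness (giving $\rho^{-1}(0)=\{0\}$ and bijectivity of $\rho$ on $E(\overline{T})$) transports this back to $M$.
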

 
\begin{definition} \cite{R6}
If $M$ is $\mathcal{J}$-irreducible, we say that $M$ is $\mathcal{J}$-irreducible of type $J$ if $$J=\{s \in S \ | \ s e_1=e_1 s\},$$ where $S$ is the set of reflections relative to $T$ and $B$ and $e_1$ is the minimal idempotent such that $e_1B=e_1Be_1$.
\end{definition}

Next, we describe the $G \times G$-orbit structure of a $\mathcal{J}$-irreducible monoid of type $J \subseteq S$. 
First, recall the partial ordering on the $G \times G$-orbits described as follows: 
$$
GeG \prec GfG \ {\rm if  \ and \  only \ if \ } GeG \subset \overline{GfG} \ {\rm if \ and \ only \ if} \ ef=e
$$
The following result was first presented in Theorem 4.16 \cite{PR}.

\begin{theorem} \cite{PR} \label{theo: lattice} Let $M$ be a $\mathcal{J}$- irreducible monoid of type $J \subseteq S$.
\begin{enumerate}
\item There is a canonical one-to-one order-preserving correspondence between the set of $G \times G$- orbits acting on $M$ and the set of $W$-orbits acting on the set of idempotents of $\overline{T}$. This set is canonically identified with $\Lambda =\{e \in E(\overline{T}) \ | \ eB=eBe\}.$
\item $\Lambda -\{0\} \cong \{I \subseteq S \ | \ {\rm no \ connected \  component \ of\ }I{\rm \ is \ contained \ entirely \  in\ } J \}$ in such a way that $e$ corresponds to $I(e) \subset S$ if $I(e)=\{s \in S \ | \ se=es \ne e \}.$
\item If $ e \in \Lambda -\{0\}$ corresponds to $I(e)$, as in 2 above, then $C_W(e)=W_{I^{*}(e)}$, where 

$I^{*}(e)=I \cup \{s \in J \ | \ st=ts \ {\rm for \ all \ t} \ \in I(e) \}.$
\end{enumerate}
\end{theorem}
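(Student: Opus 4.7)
The plan is to work entirely inside the cross section lattice $\Lambda$, exploiting the $\mathcal{J}$-irreducibility hypothesis (Theorem \ref{theo: monoid}(3)) that $W$ acts transitively on the minimal nonzero idempotents of $\overline{T}$. The three parts are nearly independent once this framework is set up, with the combinatorial identification in (2) forming the main technical content; (1) is the orbit/cross-section dictionary and (3) is a centralizer computation.

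For part (1) I would invoke the general Bruhat-type decomposition of a reductive monoid, namely $M=\bigsqcup_{e\in\Lambda}GeG$, whose proof reduces to the fact that every $G\times G$-orbit meets $\overline{T}$, and that two idempotents of $\overline{T}$ lie in the same orbit iff they are $W$-conjugate. The defining condition $Be=eBe$ then selects exactly one representative per $W$-orbit, giving the bijection with $\Lambda$. The equivalence $GeG\prec GfG\iff ef=e$ is standard (for idempotents, $GeG\subset\overline{GfG}$ is detected by multiplication in $\overline{T}$), so order preservation is automatic.

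For part (2), fix a minimal nonzero idempotent $e_1$, so $J=\{s\in S:se_1=e_1s\}$. Given $e\in\Lambda-\{0\}$, define $I(e)=\{s\in S:se=es\neq e\}\subseteq S$. The map $e\mapsto I(e)$ is well defined and order preserving; I would prove it is a bijection onto the subsets $I\subseteq S$ having no connected component contained entirely in $J$, in three steps. \emph{Injectivity.} If $I(e)=I(f)$ then $e$ and $f$ fix the same subsystem pointwise and are dominated by the same $W$-translate of $e_1$; since both lie in the same cross-section, $e=f$. \emph{Forbidden components.} Suppose a connected component $C$ of $I(e)$ lay inside $J$. Then the parabolic factor coming from $C$ already centralizes $e_1$ and could be absorbed, producing a strictly smaller idempotent in $\Lambda$ with the same orbit data, contradicting the definition of $\Lambda$. \emph{Surjectivity.} Given an admissible $I$, construct $e_I$ as the product of $e_1$ with the rank-one idempotents of $\overline{T}$ indexed by the connected components of $S-I$ that are not contained in $J$; $\mathcal{J}$-irreducibility ensures these factors behave compatibly, and one checks $e_I\in\Lambda$ with $I(e_I)=I$.

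Part (3) is then a direct verification. The inclusion $W_{I^{*}(e)}\subseteq C_W(e)$ follows because every $s\in I(e)$ commutes with $e$ by definition, and every $s\in J$ that commutes with all $t\in I(e)$ commutes with $e$ via the decomposition of $e$ in terms of $e_1$ (which commutes with all of $J$) together with factors built from $I(e)$. Conversely, an element $w\in C_W(e)$ must preserve $I(e)$ setwise, and its ``$J$-component'' must commute with each $t\in I(e)$, so $w\in W_{I^{*}(e)}$. The main obstacle is step two of part (2): showing that the Dynkin-theoretic condition ``no connected component in $J$'' is exactly the obstruction to collapsing $e$ to a strictly smaller cross-section element, which is where $\mathcal{J}$-irreducibility (via the transitive $W$-action on minimal idempotents) is indispensable.
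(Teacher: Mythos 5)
First, a point of reference: the paper does not prove this statement at all --- it is quoted verbatim from Putcha--Renner (Theorem 4.16 of \cite{PR}), so there is no internal proof to compare against. Judged on its own merits, your sketch has the right architecture for parts (1) and (3), but part (2) --- which you correctly identify as the technical heart --- does not go through as written. The missing ingredient is the finite irreducible representation $\rho:M\to\End(V)$ guaranteed by Theorem~\ref{theo: monoid}(2). All three parts are really read off from the resulting identification of $E(\overline{T})\setminus\{0\}$ with the face lattice of the weight polytope $P_{\lambda}={\rm Conv}(W\lambda)$: the nonzero idempotents of $\overline{T}$ are the projections onto sums of weight spaces whose weights lie in a face, $\Lambda$ picks out the faces $F_I={\rm Conv}(W_I\lambda)$ containing the distinguished vertex $\lambda$, and $C_W(e)$ is the setwise stabilizer of the corresponding face. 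Your argument never introduces this picture, and the gaps in part (2) occur exactly where it is needed.

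Concretely: (i) your surjectivity construction, ``$e_I$ is the product of $e_1$ with the rank-one idempotents indexed by the connected components of $S-I$ not contained in $J$,'' cannot be right --- a product of commuting idempotents has rank at most the minimum of the factors' ranks, whereas $e_I$ must have rank $|I|+1$; the correct $e_I$ is the projection onto the span of the weight spaces with weights in ${\rm Conv}(W_I\lambda)$, i.e.\ a sum (not a product) of rank-one weight projections, indexed by $W_I\lambda$ rather than by components of $S\setminus I$. (ii) Your ``forbidden components'' step misidentifies the mechanism: if a connected component $C$ of $I$ lies in $J$, then $W_C\lambda=\{\lambda\}$, hence $W_I\lambda=W_{I\setminus C}\lambda$ and $e_I=e_{I\setminus C}$; for $s\in C$ one then has $se_I=e_Is=e_I$, so $s\notin I(e_I)$. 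Nothing ``contradicts the definition of $\Lambda$'' --- $\Lambda$ contains idempotents of every rank --- rather, the assignment $I\mapsto e_I$ collapses such components, and this collapsing is simultaneously what yields injectivity on the admissible subsets. (iii) In part (3) the containment $C_W(e)\subseteq W_{I^{*}(e)}$ is the hard direction, and ``its $J$-component must commute with each $t\in I(e)$'' presupposes a decomposition of an arbitrary $w\in C_W(e)$ that you have not established; the standard argument identifies $C_W(e)$ with the stabilizer of the face $F_{I(e)}$ and shows that this stabilizer is the standard parabolic subgroup generated by the reflections it contains, namely $W_{I^{*}(e)}$.
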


The above theorem, which will be used throughout the paper is the bridge between a cross section lattice and the combinatorics of finite sets.

\vspace{5mm}

Consider a $\mathcal{J}$-irreducible monoid $M$ of type $J$ and an irreducible representation \mbox{$\rho:M \to End_{\mathbb{C}}(V)$} which is finite as a morphism.

$M$ has a reductive unit group $G$. Let $B \subset G$ be a Borel subgroup of $G$ and $T \subset B$ a maximal torus of $G$. Let $\overline{T}$ denote the Zariski closure of $T$ in $M$. By Theorem 5.4 of \cite{R4}, $\overline{T}$ is a normal affine toric variety.

The dimension of the $T$-orbit corresponding to an idempotent $e \in \overline{T}$ is called the \emph{rank} of $e$ {\it i.e.}, ${\rm rank}(e)={\rm dim} Te$.

 Let $G_0$ be a semisimple algebraic group defined as the commutator subgroup $G_0=(G,G)$ of $G$ with maximal torus $T_0=T \ \cap \ G_0$ and let $\rho_{\lambda}=\rho_{|G_0}$ be the representation of $G_0$ that corresponds to the highest weight $\lambda \in X(T_0)$. Let the Weyl group $W$ act by reflections on the vector space spanned by the roots of $G$. Take the $W$-orbit of $\lambda$ and consider its convex hull in $X(T_0) \otimes \mathbb{Q}$. We obtain the polytope $P_{\lambda}={\rm Conv}(W. \lambda) \subset X(T_0) \otimes \mathbb{Q}$.

In the cases of interest in this paper, we consider the 1-1 correspondence, as posets, between $E(\overline{T}) \setminus \{0\}$ and the face lattice of the polytope $P_{\lambda}$, namely  \begin{equation}e \in E(\overline{T}) \leftrightarrow \mathcal{F}_e,
\end{equation} such that ${\rm rank}(e)={\rm dim}(\mathcal{F}_e)+1$.
For more details on this correspondence see \cite{R2}. 

We know that $T$ is a Zariski open subset of $\overline{T}$. Hence the torus $$\frac{T}{\mathbb{C}^*} \ {\rm is \ an \ open \ subset \ of} \ \frac{\overline{T}-\{0\}}{\mathbb{C}^*}.$$

Our interest is in the projective toric variety:
$$
X(J)=\frac{\overline{T}-\{0\}}{\mathbb{C}^*}={\rm Proj}[\mathbb{C}[\overline{T}]],
$$
that depends only on $J=\{s \in S \ | \ s(\lambda)=\lambda\}$ and not on $\lambda$ or $M$.

Next, we mention a formula for calculating the number of $i$-dimensional faces of the polytope $P_{\lambda}$ using the  lattice isomorphism (1) between $E(\overline{T})$ and the face lattice of the polytope $P_{\lambda}$. The formula can be found in Lemma 4.1 in \cite{Li}. The proof is omitted there and because it serves our purpose we include it in this paper.

\begin{proposition}\cite{Li} \label{pro: face} The number of $i$-dimensional faces of $P_{\lambda}$ is:
$$f_i=\sum_{e \in \Lambda_{i+1}} \frac{|W|}{|W_{I^*(e)}|},$$
where $\Lambda_{i+1}=\{e \in \Lambda \ | \ {\rm rank}(e)={\rm dim}(Te)=i+1\}$ and to  $I(e)=\{s \in S  \ | \ se=es\ne e\}$ it corresponds $I^*(e)= I(e) \cup \{s \in J \ | \ st=ts \ {\rm for \ all} \ t \in I(e)\}.$
\end{proposition}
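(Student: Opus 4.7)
The plan is to assemble this face count from the lattice isomorphism~(1) between $E(\overline{T}) \setminus \{0\}$ and the face lattice of $P_\lambda$, together with the $W$-orbit decomposition of $E(\overline{T})$ that is controlled by the cross section lattice $\Lambda$.

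First, I would isolate the idempotents that correspond to $i$-dimensional faces. By~(1), an idempotent $e \in E(\overline{T})$ corresponds to the face $\mathcal{F}_e$ with $\dim(\mathcal{F}_e) = \text{rank}(e) - 1$, so the $i$-dimensional faces of $P_\lambda$ are in bijection with $\{e \in E(\overline{T}) \setminus \{0\} \mid \text{rank}(e) = i+1\}$. Hence counting $i$-dimensional faces reduces to counting rank-$(i+1)$ idempotents in $E(\overline{T})$.

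Next, I would partition $E(\overline{T}) \setminus \{0\}$ into $W$-orbits. The Weyl group $W$ acts on $\overline{T}$ and preserves rank, so rank-$(i+1)$ idempotents form a union of $W$-orbits. By Theorem~\ref{theo: monoid}(3) together with the general theory of $\mathcal{J}$-irreducible monoids, $\Lambda$ is a system of representatives for the $W$-orbits on $E(\overline{T})$; in particular, $\Lambda_{i+1} = \{e \in \Lambda \mid \text{rank}(e) = i+1\}$ is a system of representatives for the $W$-orbits on rank-$(i+1)$ idempotents. Therefore
$$
f_i \;=\; \sum_{e \in \Lambda_{i+1}} |W \cdot e|.
$$

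Finally, I would apply the orbit--stabilizer theorem, $|W \cdot e| = |W|/|C_W(e)|$, and invoke Theorem~\ref{theo: lattice}(3), which identifies the stabilizer as $C_W(e) = W_{I^*(e)}$. Substituting yields exactly
$$
f_i \;=\; \sum_{e \in \Lambda_{i+1}} \frac{|W|}{|W_{I^*(e)}|},
$$
as claimed. The only non-routine step is making the orbit--stabilizer computation line up with the stated centralizer; that is handled cleanly by Theorem~\ref{theo: lattice}(3), so there is no genuine obstacle — the proof is essentially a bookkeeping exercise once the face--idempotent correspondence and the $W$-orbit structure on $E(\overline{T})$ are both in hand.
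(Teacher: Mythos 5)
Your proposal is correct and follows essentially the same route as the paper: both pass through the face--idempotent correspondence so that $i$-dimensional faces are counted by rank-$(i+1)$ idempotents, decompose these into $W$-orbits with representatives in $\Lambda_{i+1}$, and finish with orbit--stabilizer using $C_W(e)=W_{I^*(e)}$ from Theorem~\ref{theo: lattice}(3). Your write-up is just a more explicit version of the paper's argument (and the orbit-representative claim is really Theorem~\ref{theo: lattice}(1) rather than Theorem~\ref{theo: monoid}(3), a cosmetic citation point only).
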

\begin{proof}

Let $\mathcal{F}_i$ be the set of all $i$-dimensional faces of the polytope $P_{\lambda}$. We know that $W.\mathcal{F}_i=\mathcal{F}_i$, as the Weyl group permutes the $i$-dimensional faces of $P_{\lambda}$. The Weyl group $W$ is acting on $E(\overline{T})$ by conjugation. Then for any $e \in E(\overline{T})$, the isotropy group of $e$ is the centralizer of $e$ in $W$, namely $W_{I^{*}(e)}$, according to Theorem~\ref{theo: lattice}.

Hence we get:$$f_i=|\mathcal{F}_i|=|W.\mathcal{F}_i|=\sum_{e \in \Lambda_{i+1}} W.e=\sum_{e \in \Lambda_{i+1}} \frac{|W|}{|W_{I^{*}(e)}|}.$$
\end{proof}

This proposition yields an interesting formula of the $h$-polynomial, which will be used frequently in proving our main results.

Recall that the nodes of a Dynkin diagram corresponds to some special vectors, called $\emph{simple roots}$, in the vector space $V=X(T) \otimes \mathbb{Q}$. The set of simple roots is denoted by $\Delta$.

A subset $Y \subseteq \Delta$ is called $\emph{connected}$ if it is a connected subset of the underlying graph of the Coxeter diagram.

For $(W,S)$ finite Weyl group, the $\emph{graph structure}$ on $S$ is defined as follows:
$$
s \ {\rm and} \ t \ {\rm are \ joined \ by \ an \ edge \ if} \ st \ne ts.
$$

Let $S(J)=\{I \subset S \ | \ {\rm no \ connected \  component \ of\ }I{\rm \ is \ contained \ entirely \  in\ } J \}$. From Theorem 2 we have that $\Lambda \setminus \{0\} \cong S(J)$. Define for any $I \in S(J)$,
$$
I^*_J=I \cup \{s \in J \ | \ st=ts \ {\rm for \ all} \ t \in I\}.
$$

\begin{corollary} \label{cor: hpoly}
The $h$-polynomial of $X(J)$ is given by:
$$h(t)=\sum_{I \subseteq S(J)} \frac{|W|}{|W_{I^*_J}|} (t-1)^{|I|}.$$
\end{corollary}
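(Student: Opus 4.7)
The plan is to combine the definitional identity $h(t) = f(t-1)$ from the introduction with the face-count formula of Proposition~\ref{pro: face}, and then re-index the resulting double sum via the bijection $\Lambda \setminus \{0\} \cong S(J)$ furnished by Theorem~\ref{theo: lattice}. Concretely, first I would write
$$
h(t) \;=\; f(t-1) \;=\; \sum_{i=0}^{d} f_i (t-1)^i
$$
and substitute Proposition~\ref{pro: face} into each coefficient, producing
$$
h(t) \;=\; \sum_{i=0}^{d}(t-1)^i \sum_{e \in \Lambda_{i+1}} \frac{|W|}{|W_{I^{*}(e)}|}.
$$
Parts (2) and (3) of Theorem~\ref{theo: lattice} then let me replace the inner index $e$ by $I = I(e) \in S(J)$ and the centralizer $W_{I^*(e)}$ by $W_{I^*_J}$ in the notation introduced just before the corollary, so every ingredient of the target formula except the exponent is already in place.

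The piece that glues the two indexings together is the grading identification $\mathrm{rank}(e) = |I(e)| + 1$, equivalently $\dim \mathcal{F}_e = |I(e)|$ under the face correspondence (1); I expect this to be the main (though small) obstacle, since Theorem~\ref{theo: lattice} describes the bijection $\Lambda \setminus \{0\} \cong S(J)$ as an abstract poset isomorphism but does not explicitly record how it interacts with the rank stratification used in Proposition~\ref{pro: face}. The plan is to justify the identification geometrically: $\mathcal{F}_e$ is the convex hull of the orbit of $\lambda$ under the parabolic subgroup $W_{I^*(e)}$, the reflections in $I(e)$ are by definition those that do not fix $e$ and contribute $|I(e)|$ independent directions to the affine span, while the remaining generators of $W_{I^*(e)}$ lie in $J$ and therefore fix $\lambda$, contributing nothing to the dimension.

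Once this rank/cardinality match is established, substituting $i = |I|$ collapses the double sum into the single sum
$$
h(t) \;=\; \sum_{I \in S(J)} \frac{|W|}{|W_{I^*_J}|}(t-1)^{|I|},
$$
which is the claimed formula; everything else is a formal re-indexing and the corollary follows.
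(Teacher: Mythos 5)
Your proposal is correct and follows essentially the same route as the paper: substitute Proposition~\ref{pro: face} into $h(t)=\sum_i f_i(t-1)^i$, use the identity $\mathrm{rank}(e)=|I(e)|+1$ together with $\Lambda=\bigsqcup_i\Lambda_{i+1}$ to collapse the double sum, and re-index over $S(J)$ via Theorem~\ref{theo: lattice}. The only difference is that you supply a geometric justification for $\mathrm{rank}(e)=|I(e)|+1$ (correctly hinging on the fact that no connected component of $I(e)$ lies in $J$), whereas the paper simply reads this identity off from Theorem~\ref{theo: lattice} and the face correspondence (1).
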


\begin{proof}

According to Theorem 2, to each element $e \in \Lambda \setminus \{0\}$ corresponds uniquely a subset of $S(J)$, denoted by $I(e)$ such that $$I(e)=\{s \in S  \ | \ se=es\ne e\}$$ and ${\rm rank}(e)=|I(e)|+1$. We associate to $I(e)$ the following set $$I^*_J(e)= I(e) \cup \{s \in J \ | \ st=ts \ {\rm for \ all} \ t \in I(e)\}.$$

Under the correspondence (1) we have that ${\rm rank}(e)={\rm dim} \mathcal{F}_e+1$ where $\mathcal{F}_e$ is the face of the polytope $P_{\lambda}$ that corresponds uniquely to $e \in E(\overline{T}) \setminus \{0\}$.

We know that the $h$-polynomial of $X(J)$ is defined in terms of the $f$-polynomial, {\it i.e.}, $h(t)=\sum_{i=0}^d f_i (t-1)^i$, where $f_i$ is the number of $i$-dimensional faces of the polytope $P_{\lambda}$. Using the previous theorem and the fact that $$\Lambda=\bigsqcup_{i=0}^{d} \Lambda_{i+1},$$ we conclude that the $h$-polynomial is given by the following formula:
$$
\begin{array}{rcl}
h(t)&=&\displaystyle\sum_{i=0}^d f_i (t-1)^i=\displaystyle\sum_{i=0}^d \sum_{e \in \Lambda_{i+1}} \frac{|W|}{|W_{I^*(e)}|} (t-1)^i\vspace{3mm}\\
&=&\displaystyle\sum_{i=0}^d \sum_{e \in \Lambda_{i+1}} \frac{|W|}{|W_{I^*(e)}|} (t-1)^{{\rm rank}(e)-1}\vspace{3mm}\\
&=&\displaystyle\sum_{i=0}^d \sum_{e \in \Lambda_{i+1}} \frac{|W|}{|W_{I^*(e)}|} (t-1)^{|I(e)|}\vspace{3mm}\\
&=&\displaystyle\sum_{e \in \Lambda \setminus \{0\}} \frac{|W|}{|W_{I^*_J(e)}|}(t-1)^{|I(e)|}
\end{array}
$$
To ease the notation in the preceding formula we replace for every $e \in \Lambda \setminus \{0\}$ the corresponding set $I(e) \subset S(J)$ by $I \subset S(J)$. We know from Theorem 2 that $\Lambda\setminus \{0\} \cong S(J)$ hence, this yields the desired formula.

\end{proof}

\section{Betti numbers of $X(J)$ in terms of the Eulerian polynomials}
\vspace{5mm}

\begin{definition}
A permutahedron $P_{n-1} \in {\mathbb R^{n}}$ is the convex hull in ${\mathbb R^{n}}$ of the set
$$
\{(p_1, p_2, \cdots ,p_{n}) \in {\mathbb R^{n}} \ | \ (p_1 \ p_2 \cdots p_{n}) \in S_{n} \}.
$$
\end{definition}

Next we introduce Eulerian polynomials associated to a  Coxeter system of type $A_n$. More results on this topic can be found in \cite{FH} and \cite{BjBr}.
\vspace{5mm}

Let $\sigma=(p_1, \cdots ,p_n) \in S_n$. Define the ascent set of $\sigma$
$$
A(\sigma)=\{i \ | \ 1 \le i \le n: \ p_i <p_{i+1}\}.
$$
It turns out that $$i \in A(\sigma) \Longleftrightarrow l(\sigma s_i)=l(\sigma)+1.$$ Here the notation $l(\sigma)$ stands for the length of a permutation $\sigma \in S_n$.

Let $$E(n,i)=|\{\sigma \in S_n \ | \ |A(\sigma)=i\}|.$$ be the Eulerian numbers.
When $(W,S)$ is finite Weyl group of type $A_{n-1}$ we define the $n$-Eulerian polynomials as follows:
$$E_n(t)=\sum_{i=0}^{n-1} E(n,i) t^i.$$

\begin{theorem}\label{theo: eulerian1} Let $h_{n-1}$ be the $h$-polynomial of the permutahedron $P_{n-1}$, and let $E_n$ be the $n$-Eulerian polynomial. Then $$h_{n-1} (t)=E_n(t).$$
\end{theorem}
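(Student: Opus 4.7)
The natural plan is to apply Corollary 1 with $J = \emptyset$ and to identify the resulting sum with $E_n(t)$ through a standard parabolic-cosets argument.

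When $J = \emptyset$, every subset $I \subseteq S$ lies in $S(J)$ (the containment condition on connected components is vacuous) and $I^*_{\emptyset} = I$. Corollary 1 therefore yields
\[
h_{n-1}(t) \;=\; \sum_{I \subseteq S} \frac{|W|}{|W_I|}(t-1)^{|I|},
\]
with $W = S_n$ and $W_I$ the parabolic subgroup generated by $I$. So the task reduces to matching this polynomial identity with the definition $E_n(t) = \sum_{\sigma \in S_n} t^{|A(\sigma)|}$.

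The key step is to interpret $|W|/|W_I|$ combinatorially. It equals the cardinality of the set $W^I$ of minimum-length left coset representatives of $W_I$ in $W$, and a standard fact from the theory of parabolic Coxeter quotients (a direct consequence of the equivalence $i \in A(\sigma) \Leftrightarrow \ell(\sigma s_i)=\ell(\sigma)+1$ already noted in the paper) says that $W^I = \{\sigma \in S_n : I \subseteq A(\sigma)\}$. Substituting $t \mapsto t+1$ and swapping the order of summation then gives
\[
h_{n-1}(t+1) \;=\; \sum_{I \subseteq S} |\{\sigma : I \subseteq A(\sigma)\}|\, t^{|I|} \;=\; \sum_{\sigma \in S_n} \sum_{I \subseteq A(\sigma)} t^{|I|} \;=\; \sum_{\sigma \in S_n}(1+t)^{|A(\sigma)|},
\]
which by the definition of the Eulerian polynomial equals $E_n(t+1)$. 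Replacing $t$ by $t-1$ concludes the proof.

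The only real content in the argument is the identification $|W|/|W_I| = |\{\sigma : I \subseteq A(\sigma)\}|$. This is the main (and essentially only) technical step; it can either be invoked from standard Coxeter-group theory or proved directly by observing that if $I$ determines the composition $(a_1, \dots, a_m)$ of $n$, then the permutations forced to be increasing at the positions marked by $I$ are precisely those increasing on each of the $m$ resulting blocks, of which there are $\binom{n}{a_1,\dots,a_m} = n!/|W_I|$. Everything else is bookkeeping.
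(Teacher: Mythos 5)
Your proof is correct, but note that the paper does not actually prove this statement: Theorem 3 is quoted as a known fact, with the proof attributed to Procesi and to Theorem 3.1 of Renner's online manuscript, which is said to use algebraic topology and ascent polynomials. Your argument is therefore a genuinely different, self-contained route, and it effectively inverts the paper's logical order: the paper derives the identity $E_{n+1}(t)=\sum_{I\subseteq S}\frac{(n+1)!}{|W_I|}(t-1)^{|I|}$ (its Theorem 4) as a consequence of Theorem 3 combined with Corollary 1, whereas you prove that identity directly --- via the identification $|W|/|W_I|=|W^I|=|\{\sigma : I\subseteq A(\sigma)\}|$ and an exchange of summation --- and then read off Theorem 3 from Corollary 1. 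The key step is correctly justified: the description of the minimal coset representatives follows from the equivalence $i\in A(\sigma)\Leftrightarrow \ell(\sigma s_i)=\ell(\sigma)+1$ already recorded in the paper, and the multinomial count $n!/|W_I|$ of permutations increasing on the blocks cut out by $I$ is right. There is no circularity, since Corollary 1 is established from the cross-section lattice independently of Theorem 3. What your approach buys is a purely combinatorial, citation-free proof of Theorems 3 and 4 simultaneously; the only thing to tidy is the convention for $W^I$ (your left-coset phrasing should match the side on which $W_I$ acts so that the length condition is $\ell(\sigma s)>\ell(\sigma)$ for $s\in I$), which is a bookkeeping point, not a gap.
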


Renner gives a proof of this known fact \cite{Pro} in Theorem 3.1 \cite{R10} using algebraic topology and ascent polynomials. As a consequence we obtain a characterization of the $(n+1)$-Eulerian polynomial in terms of all subsets $I \subseteq S$. Recall that $S=\{s_1, \cdots ,s_n\}$ is the minimal set of reflections that generates the permutation group $S_{n+1}$ with $s_i=(i, \ i+1) \in S_{n+1}$.

\begin{theorem}\label{theo: eulerian2} Let $E_{n+1}$ be the $(n+1)$-Eulerian polynomial. The following identity holds:
$$     
E_{n+1}(t)= \sum_{I \subseteq S} \frac{(n+1)!}{|W_I|} (t-1)^{|I|}.
$$
\end{theorem}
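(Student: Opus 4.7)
The plan is to specialize Corollary \ref{cor: hpoly} to the case $J = \emptyset$ and then invoke Theorem \ref{theo: eulerian1}. The two results were designed precisely to combine in this way: the corollary gives a closed expression for the $h$-polynomial of $X(J)$ in terms of certain subsets of $S$, and Theorem \ref{theo: eulerian1} identifies this $h$-polynomial, when $J$ is empty, with an Eulerian polynomial.

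First I would unpack what Corollary \ref{cor: hpoly} says when $J = \emptyset$. The index set is
$$S(\emptyset) = \{I \subseteq S \mid \text{no connected component of } I \text{ is entirely contained in } \emptyset\}.$$
Since connected components of a subset $I$ are non-empty, this condition is vacuously satisfied by every $I \subseteq S$, so $S(\emptyset) = 2^S$ is the full power set. Likewise, the auxiliary set $I^*_J = I \cup \{s \in J \mid st = ts \text{ for all } t \in I\}$ collapses to $I^*_\emptyset = I$, because $J$ is empty. Substituting $|W| = |S_{n+1}| = (n+1)!$ into Corollary \ref{cor: hpoly}, the $h$-polynomial of $X(\emptyset)$ equals
$$h(t) = \sum_{I \subseteq S} \frac{(n+1)!}{|W_I|}(t-1)^{|I|}.$$

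On the other hand, when $J = \emptyset$ the highest weight $\lambda$ is generic, so the polytope $P_\lambda$ is the permutohedron for $S_{n+1}$, of dimension $n$. Theorem \ref{theo: eulerian1}, applied with $n$ replaced by $n+1$, identifies the $h$-polynomial of this permutohedron with the Eulerian polynomial $E_{n+1}(t)$. Equating the two expressions for $h(t)$ yields exactly the claimed identity.

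There is essentially no obstacle here; the result is a direct translation of Corollary \ref{cor: hpoly} in the $J = \emptyset$ case, combined with the known fact that the $h$-polynomial of the permutohedron is the Eulerian polynomial. The only thing one must be careful about is the vacuous-truth reading of the defining condition for $S(\emptyset)$, so that the sum is taken over every subset of $S$ without omission.
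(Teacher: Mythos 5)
Your proposal is correct and follows essentially the same route as the paper: specialize Corollary \ref{cor: hpoly} to $J=\emptyset$, observe that $S(\emptyset)=2^S$ and $I^*_\emptyset=I$, and identify the resulting $h$-polynomial of the permutohedron with $E_{n+1}(t)$ via Theorem \ref{theo: eulerian1}. Your explicit remark that the defining condition for $S(\emptyset)$ is vacuously satisfied is a small clarification the paper leaves implicit, but the argument is the same.
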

\begin{proof}

Consider $(W,S)=(S_{n+1}, S)$ finite Weyl group of type $A_n$ and let $J=\emptyset$. When $J=\emptyset$, the highest weight $\lambda$ is in the interior of the fundamental Weyl chamber.  By applying reflections $s_i=(i, \ i+1) \in S_{n+1}$ about the hyperplanes orthogonal to the simple roots we permute $i$ and $i+1$ coordinates of $\lambda$. The polytope $P_{\lambda}$ given by the convex hull of the $W$-orbit of $\lambda$ turns out to be an $n$ permutahedron since its vertices are obtained by permuting their coordinates. From Theorem 3 and Corollary 1 we have that $$h(t)=E_{n+1}=\sum_{I \subseteq S(J)} \frac{|W|}{|W_{I^*_J}|}(t-1)^{|I|}.$$

We observe that when $J=\emptyset$ for every $I \subseteq S(J)$ the following holds: $$I^*_J=I \cup \{s \in J \ | \ st=ts \ {\rm for \ all} \ t \in I\}=I.$$

\end{proof}

For the remainder of this section we specialize the discussion to the case of finite Weyl group of type $A_n$ where $W=S_{n+1}$ and $S=\{s_1, \cdots ,s_n\}$.

\begin{theorem} \label{theo: eulerian3}Let $J=\{s_n\}$. Then $J$ is combinatorially smooth and the $h$-polynomial of $X(J)$ is given by
$$
h(t)=E_{n+1}(t)- \binom{n+1}{2} t E_{n-1}(t).
$$
\end{theorem}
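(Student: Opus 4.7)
The combinatorial smoothness of $J = \{s_n\}$ is immediate from case (3) of Renner's classification with $j = n$, so my plan focuses on the $h$-polynomial formula. The strategy is to compare the sum-over-subsets expression of Corollary~1 for $h(t)$ with the analogous expression for $E_{n+1}(t)$ furnished by Theorem~7, and to isolate the difference $E_{n+1}(t) - h(t)$ term-by-term.

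First I would unpack $S(J)$ for $J = \{s_n\}$: the omitted subsets are exactly those containing $s_n$ but not $s_{n-1}$, so that $\{s_n\}$ appears as an isolated component. For $I \in S(J)$, I would verify that $I^*_J = I$ whenever $s_{n-1} \in I$, while $I^*_J = I \cup \{s_n\}$ exactly when $s_n, s_{n-1} \notin I$ (equivalently, $I \subseteq \{s_1, \ldots, s_{n-2}\}$); in the latter case $s_n$ commutes with every element of $I$, hence $W_{I^*_J} \cong W_I \times \langle s_n \rangle$ and $|W_{I^*_J}| = 2|W_I|$.

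With this in hand, I would form $E_{n+1}(t) - h(t)$. All subsets with $s_{n-1} \in I$ cancel, leaving contributions from two classes: (a) each $I \subseteq \{s_1,\ldots,s_{n-2}\}$ contributes the residue $\tfrac{1}{2}\frac{|W|}{|W_I|}(t-1)^{|I|}$ from the doubled stabilizer, and (b) each forbidden subset $I \cup \{s_n\}$ (with $I \subseteq \{s_1,\ldots,s_{n-2}\}$) contributes the full $\frac{|W|}{|W_{I \cup \{s_n\}}|}(t-1)^{|I|+1} = \frac{t-1}{2}\cdot\frac{|W|}{|W_I|}(t-1)^{|I|}$. The pivotal move is the bijection $I \leftrightarrow I \cup \{s_n\}$ between classes (a) and (b). Using $1 + (t-1) = t$, the difference collapses to
\[
E_{n+1}(t) - h(t) = t \sum_{I \subseteq \{s_1, \ldots, s_{n-2}\}} \frac{|W|}{2|W_I|}(t-1)^{|I|}.
\]

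Finally, I would apply Theorem~7 to the Coxeter subsystem on $\{s_1,\ldots,s_{n-2}\}$, which is of type $A_{n-2}$ with Weyl group $S_{n-1}$ of order $(n-1)!$, to rewrite the inner sum as $\frac{(n+1)!}{2(n-1)!}E_{n-1}(t) = \binom{n+1}{2} E_{n-1}(t)$, producing the advertised identity. The step I expect to be the main obstacle is the careful case analysis of $I^*_J$ and the resulting stabilizer orders; once that bookkeeping is laid out correctly, the pairing $I \leftrightarrow I \cup \{s_n\}$ is the natural geometric idea and the remaining arithmetic is routine.
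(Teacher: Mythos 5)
Your proof is correct and follows essentially the same route as the paper: both arguments rest on the same partition of subsets according to whether $s_{n-1}$ (and then $s_n$) belongs to $I$, the same computation of $I^*_J$ and of the doubled stabilizer $W_I \times \langle s_n\rangle$, and the same application of the subset-sum identity for Eulerian polynomials to the type $A_{n-2}$ subsystem on $\{s_1,\ldots,s_{n-2}\}$. The only difference is organizational — you cancel the $s_{n-1}\in I$ terms directly in $E_{n+1}(t)-h(t)$ and use the pairing $I \leftrightarrow I\cup\{s_n\}$ with $1+(t-1)=t$, whereas the paper solves for the $M_0$-sum separately before subtracting — which is a slightly cleaner arrangement of the identical computation.
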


\begin{proof}
 
From Corollary 3.5 \cite{R6} we obtain that $J=\{s_n\}$ is combinatorially smooth. Thus $P_{\lambda}$ is a simple polytope and so $X(J)$ is a rationally smooth variety.

Let $M$ be a $\mathcal{J}$-irreducible monoid of type $J=\{s_n\}$. We associate to $M$ the cross section lattice denoted by $\Lambda(1)$. Recall that
$$S(J(1,n))=\{I \subseteq S \ | \ {\rm no \ connected \  component \ of\ }I{\rm \ is \ contained \ entirely \  in\ } J \}.$$ From Theorem~\ref{theo: lattice} we have that $\Lambda(1) \setminus \{0\} \cong S(J(1,n))$.

Furthermore, we want to partition $S(J(1,n))$ into disjoint sets, some of which contain $s_{n-1}$ and the others do not contain $s_{n-1}$. This can be done in the following way:
$$
S(J(1,n))=\{A \subseteq S \ | \ s_{n-1} \in A \} \sqcup \{A \subseteq S \ | \ s_{n-1} \notin A, s_n \notin A \}.
$$   
Notice that for $A \subseteq S(J(1, n)) \setminus \{0\}$ such that $s_{n-1} \notin A$ we have $s_n \notin A$, since $\{s_n\}$ is a connected component of A contained entirely in J, and this contradicts the definition of $S(J(1,n))$.

We are now in position to introduce notation for the two disjoint subsets of $S(J(1,n))$.
Let $$
\begin{array}{rcl}
M_0&=&\{A \subseteq S \ | \ s_{n-1} \in A \}. \vspace{3mm}\\
M_1&=&\{A \subseteq S \ | \ s_{n-1} \notin A, s_n \notin A \}.
\end{array}
$$

Then the $h$-polynomial of $X(J)$ is computed using Corollary 1:
$$h(t)=\sum_{A \subseteq M_0} \frac{(n+1)!}{|W_{A^{*}}|} (t-1)^{|A|} +\sum_{A \subseteq M_1} \frac{(n+1)!}{|W_{A^{*}}|} (t-1)^{|A|}.  
$$
Next, we determine $$A^{*}=A \cup \{s \in J \ | \ st=ts \ {\rm for  \ any} \ t \in A \},$$ for any $A \subseteq S(J(1,n))$.

For $A \subseteq M_0$ we have $A^{*}=A  \ {\rm and} \ W_{A^{*}}=W_A$.

For $A \subseteq M_1$ we have $A^{*}=A \cup \{s_n\}$ and $W_{A^{*}}$ is the subgroup generated by $A \cup \{s_n\}$ namely, $$W_{A^{*}}=W_A \times S_2.$$ 

It follows that the $h$-polynomial of $X(J)$ is given by:
\begin{equation}
\begin{array}{rcl}
h(t)&=&\displaystyle\sum_{A \subseteq M_0} \frac{(n+1)!}{|W_{A}|} (t-1)^{|A|} +\sum_{A \subseteq M_1} \frac{(n+1)!}{|W_{A} \times S_2|} (t-1)^{|A|}\; \vspace{3mm}\\
 &=& \displaystyle\sum_{A \subseteq M_0} \frac{(n+1)!}{|W_{A}|} (t-1)^{|A|}+ \frac{n(n+1)}{2} \sum_{A \subseteq M_1} \frac{(n-1)!}{|W_A|} (t-1)^{|A|}.
\end{array}
\end{equation}

By definition $M_1=S \setminus \{s_n, s_{n-1}\}$ and using Theorem~\ref{theo: eulerian2} we are able to express the \mbox{$(n-1)$}-Eulerian polynomial in terms of the subsets of $M_1$. We obtain the following:
$$E_{n-1}(t)=\sum_{A \subseteq M_1 } \frac{(n-1)!}{|W_A|}(t-1)^{|A|}.$$
\noindent By (1), this implies that the $h$-polynomial of $X(J)$ equals: 
\begin{equation}
h(t)= \sum_{A \subseteq M_0} \frac{(n+1)!}{|W_{A}|} (t-1)^{|A|}+ \frac{n(n+1)}{2} E_{n-1}(t).
\end{equation}
Next, let $N_0=\{A \subseteq S \ | \ s_{n-1} \notin A \}$ and apply Theorem~\ref{theo: eulerian2} to the $(n+1)$- Eulerian polynomial. The following identity is obtained:
\begin{equation}
\begin{array}{rcl}
E_{n+1}(t)&=&\displaystyle\sum_{A \subseteq S} \frac{(n+1)!}{|W_A|} (t-1)^{|A|} \vspace{3mm}\\
& =&\displaystyle\sum_{A \subseteq M_0} \frac{(n+1)!}{|W_A|} (t-1)^{|A|} + \sum_{A \subseteq N_0} \frac{(n+1)!}{|W_A|} (t-1)^{|A|}.
\end{array}
\end{equation}
\noindent In the preceding formula we rearrange the set $N_0$ in terms of the subset $M_1$ and obtain: 
$$N_0=\{A \subseteq S \ | \ s_{n-1} \notin A, s_n \notin A \} \sqcup \{A \subset S \ | \ s_{n-1} \notin A, s_n \in A \}.$$
Let $$N_1=\{A \subseteq S \ | \ s_{n-1} \notin A, s_n \in A \}=\{A' \cup \{s_n\} \ | \ A' \subseteq \{s_1, s_2, \cdots ,s_{n-2}\}\},$$
such that $N_0=M_1 \sqcup N_1.$

Direct computation shows that the subgroup generated by $A$ can be expressed as follows: for $A \subseteq N_1$ there exists $A' \subseteq \{s_1,s_2, \cdots s_{n-2}\}$ such that $A=A' \cup \{s_n\}$ and $$W _A =W_{A'} \times S_2.$$
Therefore we can evaluate the following summand of $E_{n+1}(t)$:
\begin{equation}
\begin{array}{rcl}
\displaystyle\sum_{A \subseteq N_0} \frac{(n+1)!}{|W_A|} (t-1)^{|A|}&=&\displaystyle\sum_{A \subseteq M_1} \frac{(n+1)!}{|W_A|} (t-1)^{|A|}+ \sum_{A \subseteq N_1} \frac{(n+1)!}{|W_A|} (t-1)^{|A|} \vspace{3mm}\\
& =&\displaystyle\sum_{A \subseteq M_1} \frac{(n+1)!}{|W_A|} (t-1)^{|A|}+ \vspace{3mm}\\
&& \displaystyle\sum_{A' \subseteq M_1} \frac{(n+1)!}{|W_{A'}|| S_2|} (t-1)^{|A'|+1}\vspace{3mm}\\
& =&\displaystyle \sum_{A \subseteq M_1} \frac{(n+1)!}{|W_A|} (t-1)^{|A|}\;+\vspace{2mm}\\
&& \displaystyle\frac{n(n+1)}{2}(t-1)\sum_{A' \subseteq M_1} \frac{(n-1)!}{|W_{A'}|} (t-1)^{|A'|}\vspace{3mm}\\
& =&\displaystyle n(n+1)E_{n-1}(t)+\frac{n(n+1)}{2} (t-1)E_{n-1}(t) \vspace{3mm}\\& =&\displaystyle\frac{n(n+1)}{2}(t+1)E_{n-1}(t).
\end{array}
\end{equation}

From (4) and (5) we obtain the first summand of the $h$-polynomial stated in (2):
\begin{equation}
\sum_{A \subseteq M_0} \frac{(n+1)!}{|W_A|}=E_{n+1}(t)-\frac{n(n+1)}{2} (t+1) E_{n-1}(t).
\end{equation}

By (2) and (6), this implies that the following relation holds:
$$
\begin{array}{rcl}
h(t)&=&\displaystyle E_{n+1}(t)-\frac{n(n+1)}{2}(t+1)E_{n-1}(t)+\frac{n(n+1)}{2}E_{n-1}(t) \vspace{3mm}\\
&=& \displaystyle E_{n+1}(t)- \binom{n+1}{2}t E_{n-1}(t).
\end{array}
$$
\end{proof}

\begin{corollary} \label{cor: poincare1}The Poincar\'e polynomial of $X(J)$ with $J=\{s_n\}$ is given by:
\begin{equation}
P(t)=E_{n+1}(t^2)-\binom{n+1}{2} t^2 E_{n-1}(t^2).
\end{equation}
\end{corollary}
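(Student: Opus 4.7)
The plan is essentially a one-line appeal to the previous theorem combined with the standard relationship between the Poincaré polynomial of a rationally smooth projective toric variety and its $h$-polynomial.

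First I would invoke the fact, recorded in the introduction, that whenever $P$ is a simple rational polytope and $X_P$ is the associated projective toric variety, the Poincaré polynomial of $X_P$ coincides with $h(t^2)$, where $h$ is the $h$-polynomial of $P$. Since the previous theorem establishes that $J=\{s_n\}$ is combinatorially smooth, the polytope $P_\lambda$ is simple and $X(J)$ is rationally smooth, so this general principle applies to $X(J)$. In particular, cohomology vanishes in odd degrees and the relation $P(t) = h(t^2)$ holds.

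Next I would simply substitute $t^2$ for $t$ in the formula
\[
h(t) = E_{n+1}(t) - \binom{n+1}{2}\, t\, E_{n-1}(t)
\]
proved in the preceding theorem. This substitution yields
\[
P(t) = h(t^2) = E_{n+1}(t^2) - \binom{n+1}{2}\, t^2\, E_{n-1}(t^2),
\]
which is exactly the claimed identity.

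There is no real obstacle here; the only point worth flagging is that one must justify the passage $P(t)=h(t^2)$, i.e., that $X(J)$ has cohomology concentrated in even degrees and that the even Betti numbers equal the entries $h_i$ of the $h$-vector. This is a standard consequence of rational smoothness together with Danilov's theorem cited earlier in the paper, and it transfers cleanly to our setting because $J=\{s_n\}$ lies in Renner's list of combinatorially smooth subsets of type $A_n$.
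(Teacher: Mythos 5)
Your proposal is correct and follows exactly the paper's own argument: the paper likewise cites the relation $P(t)=h(t^2)$ for the rationally smooth toric variety $X(J)$ and substitutes it into the $h$-polynomial formula of the preceding theorem. The extra justification you flag (even-degree cohomology via rational smoothness and Danilov's theorem) is a reasonable elaboration of the same step, not a different route.
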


\begin{proof}
We use the relation between the $h$-polynomial and the Poincar\'e polynomial recorded in \cite{Stan}, namely $$h(t^2)=P(t).$$ Hence our result follows immediately.
\end{proof}
\vspace{5mm}
     
Next, we generalize the previous result for  $1 \le k$ to the case of $$J(k,n)=\{s_{n-k+1}, s_{n-k+2}, \cdots ,s_n\} \subseteq S.$$ The main result of this section is a recurrence relation for the Poincar\'e polynomial of $X(J(k,n))$ in terms of the $(n-k)$-Eulerian polynomials.
\begin{theorem} \label{theo: eulerian4}${\rm Let} \ J(k,n)=\{s_{n-k+1}, s_{n-k+2}, \cdots ,s_n\} \ {\rm for} \ 1 \le k \le n$, and let $h_k(t)$ denote the $h$-polynomial of $X(J(k,n))$. Then $J(k,n)$ is combinatorially smooth. The following recurrence relation holds:
\begin{equation}
h_k(t)= h_{k-1}(t)- \binom{n+1}{k+1} (t^{k} +t^{k-1}+ \cdots +t)E_{n-k}(t).
\end{equation}
\end{theorem}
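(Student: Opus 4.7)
First, combinatorial smoothness of $J(k,n)$ follows from Renner's classification (Corollary 3.5 of \cite{R6}), since $J(k,n)$ has the form (3) listed in the introduction. My plan is to compute the difference $h_{k-1}(t) - h_k(t)$ directly from the formula of Corollary~\ref{cor: hpoly} and show it equals $\binom{n+1}{k+1}(t+t^2+\cdots+t^k)E_{n-k}(t)$. Since $J(k-1,n) \subset J(k,n)$ properly, the condition ``no connected component of $I$ lies in $J$'' is weaker for $J(k-1,n)$, so $S(J(k,n)) \subseteq S(J(k-1,n))$. I split $S(J(k-1,n))$ into the common part and the difference $D := S(J(k-1,n)) \setminus S(J(k,n))$.

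A direct combinatorial analysis shows that $D$ consists precisely of the sets $I = C_j \cup A'$, where $C_j = \{s_{n-k+1},\ldots,s_{n-k+j}\}$ for $1 \le j \le k$ and $A' \subseteq \{s_1,\ldots,s_{n-k-1}\}$: the block $C_j$ is a connected component lying entirely inside $J(k,n)$ but, since $s_{n-k+1} \notin J(k-1,n)$, not inside $J(k-1,n)$; and any other element of $I$ must lie in $\{s_1,\ldots,s_{n-k-1}\}$, lest it create a second component inside $J(k-1,n)$. For the common part $S(J(k,n))$, I split according to whether $s_{n-k} \in I$: in Case A ($s_{n-k} \notin I$) one has $I \subseteq \{s_1,\ldots,s_{n-k-1}\}$, while in Case B ($s_{n-k} \in I$) one has $I = A \cup \{s_{n-k},s_{n-k+1},\ldots,s_{n-k+j}\}$ with $A \subseteq \{s_1,\ldots,s_{n-k-1}\}$ and $0 \le j \le k$.

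The key cancellation is that in Case B one always has $I^*_{J(k,n)} = I^*_{J(k-1,n)}$: the only element of $J(k,n) \setminus J(k-1,n)$, namely $s_{n-k+1}$, either already lies in $I$ (when $j \ge 1$) or fails to commute with $s_{n-k} \in I$ (when $j = 0$), so it is never freshly added. Hence Case B contributes identically to $h_{k-1}$ and $h_k$ and cancels. In Case A, by contrast, $I^*_{J(k,n)} = A \cup \{s_{n-k+1},\ldots,s_n\}$ and $I^*_{J(k-1,n)} = A \cup \{s_{n-k+2},\ldots,s_n\}$, giving parabolic orders $|W_A|(k+1)!$ and $|W_A|\,k!$ respectively. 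For $I = C_j \cup A' \in D$, the set $I^*_{J(k-1,n)} = A' \cup C_j \cup \{s_{n-k+j+2},\ldots,s_n\}$ is a disjoint union of three blocks of the Coxeter graph (separated at $s_{n-k}$ and $s_{n-k+j+1}$), yielding $|W_{I^*}| = |W_{A'}|(j+1)!(k-j)!$. Applying Theorem~\ref{theo: eulerian2} to the type $A_{n-k-1}$ subsystem generated by $\{s_1,\ldots,s_{n-k-1}\}$ to recognize $\sum_{A}\tfrac{(n-k)!}{|W_A|}(t-1)^{|A|} = E_{n-k}(t)$, and combining the Case A difference with the $D$ contribution, gives
\[
h_{k-1}(t) - h_k(t) = \binom{n+1}{k+1}\, E_{n-k}(t) \left[\sum_{j=1}^{k}\binom{k+1}{j+1}(t-1)^j + k\right].
\]

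It remains to verify the polynomial identity $\sum_{j=1}^{k}\binom{k+1}{j+1}(t-1)^j + k = t + t^2 + \cdots + t^k$. Expanding $t^{k+1} = ((t-1)+1)^{k+1} = \sum_{i=0}^{k+1}\binom{k+1}{i}(t-1)^i$, isolating the $i \ge 2$ terms and dividing by $t-1$ gives $\sum_{j=1}^{k}\binom{k+1}{j+1}(t-1)^j = (t^k+\cdots+t+1) - (k+1)$, and adding $k$ produces the desired geometric sum. The main technical obstacle is the case analysis establishing the Case B cancellation and the careful bookkeeping of parabolic subgroup orders, which hinges on tracking the gaps in the reflection set at $s_{n-k}$ and $s_{n-k+j+1}$ to confirm that the relevant parabolics split as direct products of symmetric groups.
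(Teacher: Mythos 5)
Your proposal is correct and follows essentially the same route as the paper: you decompose $S(J(k-1,n))$ as $S(J(k,n))$ together with the extra sets $D$ (the paper's $N_i$, indexed by $i=k-j$), observe that the terms with $s_{n-k}\in I$ cancel because $I^*_{J(k,n)}=I^*_{J(k-1,n)}$ there, reduce the surviving sums to $E_{n-k}(t)$ via Theorem~\ref{theo: eulerian2}, and finish with the same binomial identity $\sum_{j=1}^{k}\binom{k+1}{j+1}(t-1)^j+k=t+\cdots+t^k$. The parabolic-order computations $|W_{I^*}|=|W_A|(k+1)!$, $|W_A|k!$, and $|W_{A'}|(j+1)!(k-j)!$ all match the paper's, so no further changes are needed.
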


\begin{proof} From Corollary 3.5 \cite{R6} we obtain that $J(k,n)$ is combinatorially smooth. Thus $X(J(k,n))$ is rationally smooth as the corresponding polytopes $P_{\lambda,k}$ are simple polytopes. Let $M$ be a $\mathcal{J}$-irreducible monoid of type $J(k,n)$, where $$J(k,n)=\{s_{n-k+1}, s_{n-k+2}, \cdots ,s_n\} \subseteq S,$$ and let $\Lambda(k)$ denote the cross section lattice associated to $M$. From Theorem 2 we have that $\Lambda(k) \setminus \{0\} \cong S(J(k,n))$, where $S(J(k,n))$ was defined previously as $S(J(k,n))= \{I \subset S \ | \ {\rm no \ connected \  component \ of\ }I{\rm \ is \ contained \ entirely \  in\ } J(k,n) \}$. 
Next, consider $$M_i=\{A \subseteq S \ | \ J(k+1,n)-J(i,n) \subseteq A \subseteq S \setminus J(i,n)\},$$ for $0 \le i \le k+1$. 
So, in particular,
$$M_0=\{A \subseteq S \ | \ J(k+1,n) \subseteq A \},$$ and
$$M_{k+1}=\{A \subseteq S \ | \ A \subseteq S \setminus J(k+1,n)\}.$$
Hence, we have $$S(J(k,n))=\bigsqcup_{i=0}^{k+1} M_i,$$ for $0 \le i \le k+1$.

We associate to each $A \subseteq S(J(k,n))$, $$A^*_k= A \sqcup \{s \in J(k,n) \ | \ st=ts \ {\rm for \  any } \ t \in A \}.$$

We compute the $h$-polynomial of $X(J(k,n)$ using Corollary~\ref{cor: hpoly} and obtain:
\begin{equation}
h_{k}(t) = \sum_{i=0}^{k+1}\sum_{ A \subseteq M_i} \frac{(n+1)!}{|W_{A_{k}^{*}}|} (t-1)^{|A|}.
\end{equation}

\noindent Then for $A \subseteq M_0$  and for $A \subseteq M_1$, we have $A^{*}_k= A$ and $W_{A^{*}_k}=W_A$. For $A \subseteq M_i$, for $2 \le i \le k+1$ we have $A^{*}_k= A \sqcup J(i-1)$  and $W_{A^{*}_k}=W_A \times S_{i}.$
Thus, the $h$-polynomial of $X(J(k,n))$ is given by:
\begin{equation}
h_{k}(t) = \sum_{i=0}^{k+1} \sum_{ A \subseteq M_i} \frac{(n+1)!}{i!\times |W_A|} (t-1)^{|A|}.
\end{equation}
Consider $J(k-1, n)= \{s_{n-k+2}, \cdots ,s_n \}$. Then the cross-section lattice $\Lambda(k-1) \setminus \{0\}$ corresponding to $J(k-1, n)$ has the property that $\Lambda(k-1) \setminus \{0\} \cong S(J(k-1,n))$ according to Theorem 2.

Let $$S_i=\{A \subseteq S \ | \ J(k,n) \setminus J(i,n) \subseteq A \subseteq S \setminus J(i,n)\},$$ for $0 \le i \le k$. So, in particular, $$S_0=\{A \subseteq S \ | \ J(k,n) \subseteq A\},$$ and $$S_{k}=\{A \subseteq S \ | \ A \subseteq S \setminus J(k,n)\}.$$
Note that for each $i=0, \cdots ,k$, we have 
$$
S_i \cap \{A \subseteq S \ | \ s_{n-k} \in A\}=M_i,$$ and 
$$S_i \cap \{A \subseteq S \ | \ s_{n-k} \notin A\}=\{A \subseteq S  \ | \ J(k,n) \setminus J(i,n) \subseteq A \subset S \setminus (J(i,n) \cup \{s_{n-k}\}).$$
Let 
$$
\begin{array}{rcl}
N_i&=& \displaystyle \{A \subseteq S  \ | \ J(k,n) \setminus J(i,n) \subseteq A \subseteq S \setminus (J(i,n) \cup \{s_{n-k}\})\vspace{3mm} \\
&=& \displaystyle\{A' \cup (J(k,n) \setminus J(i,n)) \ | A' \subseteq S \setminus J(k+1,n)\}
\end{array}
$$ Note also that $N_k=M_{k+1}$. Then the following relation holds:
$$
S(J(k-1,n))= S(J(k,n)) \sqcup \bigsqcup_{i=0}^{k-1}N_i.
$$
Next we compute $A^*_{k-1}=A^*_{J(k-1,n)}$ for all $A \subseteq S(J(k-1,n)$. For $A \subseteq M_0$ and $A \subseteq M_1$, we have $$A^*_{k-1}=A \ {\rm and} \ W_{A^*_{k-1}}=W_A.$$ For $A \subseteq M_i: i=2 \cdots k$, we have $$A^*_{k-1}=A \cup J(k-1,n) \ {\rm and } \ W_{A^*}=W_A \times S_i.$$
We know that
$$
M_{k+1}=S \setminus J(k+1,n).
$$
Hence for $A \subseteq M_{k+1}$, we have $$A^*_{k-1}=A \cup J(k-1,n) \ {\rm and} \ W_{A^*}=W_A \times S_k.$$ For $A \subseteq N_0$ and $A \subseteq N_1$, we have $$A^*_{k-1}=A \ {\rm and} \ W_{A^*_{k-1}}=W_A.$$
For $A \subseteq N_i$ where $i =1, \cdots ,k-1$, we have $$A^*_{k-1}=A \cup J(i-1,n) \ {\rm and} \ W_{A^*_{k-1}}=W_A \times S_i.$$
Furthermore, the $h$-polynomial of $X(J(k-1,n))$ is given by:
\begin{equation}
\begin{array}{rcl}
h_{k-1}(t)&=& \displaystyle \sum_{i=0}^{k}\sum_{ A \subseteq M_i} \frac{(n+1)!}{i! \times |W_A|} (t-1)^{|A|}+ \vspace{3mm}\\
&&+\displaystyle \sum_{i=0}^{k-1}\sum_{ A \subseteq N_{i}} \frac{(n+1)!}{|W_A| \times i!} (t-1)^{|A|} \vspace{3mm}\\
&=& \displaystyle\sum_{A \subseteq M_{k+1}} \frac{|W|}{k! \times |W_A|}(t-1)^{|A|}.
\end{array}
\end{equation}
By (10) and (11), this implies that
\begin{equation}
\begin{array}{rcl}
h_{k-1}(t)-h_k(t)&=& \displaystyle\sum_{i=0}^{k-1}\sum_{ A \subseteq N_{i}} \frac{(n+1)!}{i! \times |W_A|} (t-1)^{|A|}+ \vspace{3mm}\\
&& +\displaystyle(\frac{1}{k!} - \frac{1}{(k+1)!}) \sum_{A \subseteq M_{k+1}} \frac{(n+1)!}{|W_A|}.
\end{array}
\end{equation}

The following relations hold: for $A \subseteq N_i$ there exists $A' \subseteq M_{k+1}$ such that $$A=A' \cup (J(k,n) \setminus J(i,n)) \ {\rm and} \ W_A=W_{A'} \times S_{k-i+1}.$$ 
We use these relations in (12) and obtain the following:
$$
\begin{array}{rcl}
h_{k-1}(t)-h_k(t)&=& \displaystyle\sum_{i=0}^{k-1} \sum_{A' \subseteq M_{k+1}}\frac{(n+1)!}{i! (k-i+1)!|W_{A'}|} (t-1)^{|A'|+k-i}\;\vspace{1mm}\\
&& +\displaystyle \frac{(n+1)!}{(n-k)!}(\frac{1}{k!}- \frac{1}{(k+1)!}) \sum_{A \subseteq M_{k+1}} \frac{(n-k)!}{|W_A|} (t-1)^{|A|}\;\vspace{3mm}\\
&=& \displaystyle\sum_{i=0}^{k-1}\frac{(n+1)!}{(n-k)!(k-i+1)! i!} (t-1)^{k-i}\sum_{A' \subseteq M_{k+1}} \frac{(n-k)!}{|W_{A'}|} (t-1)^{|A'|}\;\vspace{1mm}\\
&& \displaystyle +\frac{(n+1)!}{(n-k)!(k+1)!} k \sum_{A \subseteq M_{k+1}} \frac{(n-k)!}{|W_{A}|} (t-1)^{A|}.
\end{array}
$$
Theorem~\ref{theo: eulerian2} allows us to express the $(n-k)$-Eulerian polynomial in terms of the subsets of $M_{k+1}$:
$$
E_{n-k}(t)=\sum_{A \subseteq M_{k+1}}\frac{(n-k)!}{|W_A|} (t-1)^{|A|}.
$$
In the next formula we replace $(k-i+1)! \times i!$ by $\displaystyle \frac{1}{(k+1)!} \binom{k+1}{i}$ and obtain:
\begin{equation}
\begin{array}{rcl}
h_k(t)-h_{k-1}(t)&=& \displaystyle\sum_{i=0}^{k-1} \frac{(n+1)!}{(n-k)!(k-i+1)! \times i!} (t-1)^{k-i} E_{n-k}(t)+\;\vspace{3mm}\\
&& \displaystyle \frac{(n+1)!}{(n-k)!(k+1)!} k E_{n-k}(t) \vspace{3mm}\\
&=& \displaystyle\left[
\sum_{i=0}^{k-1} \frac{(n+1)!}{(n-k)!(k+1)!} \binom{k+1}{i} (t-1)^{k-i}\right.\;+\vspace{1mm}\\
&& \displaystyle \left.\frac{(n+1)!}{(n-k)!(k+1)!} k\right]
E_{n-k}(t)\;\vspace{3mm}\\
&=& \displaystyle\binom{n+1}{k+1}\left[\sum_{i=0}^{k-1} \binom{k+1}{i}(t-1)^{k-i}+k\right]E_{n-k}(t).
\end{array}
\end{equation}

We need now to show that \begin{equation}\sum_{i=0}^{k-1}(\binom{k+1}{i}(t-1)^{k-i}+k)=\sum_{i=1}^k t^i.
\end{equation}

Let $$f(t)=\sum_{i=0}^{k-1}\binom{k+1}{i}(t-1)^{k-i}.$$ Observe that
$$
\displaystyle \sum_{i=0}^{k+1} \binom{k+1}{i} (t-1)^{k+1-i}=(t-1)f(t)+\binom{k+1}{k}(t-1)+\binom{k+1}{k+1}(t-1)^0.$$

By the binomial theorem, the left hand side is $$t^{k+1}=((t-1)+1)^{k+1}.$$
So $$f(t)+k=\frac{t^{k+!}-(k+1)(t-1)-1}{t-1}+k=\frac{t^{k+1}-t}{t-1}=\sum_{i=1}^k t^i.$$
The theorem now follows from (13) and (14).
\end{proof}

\begin{corollary} \label{cor: poincare4} Let $P_k(t)$ be the Poincar\'e polynomial of $X(J(k,n))$. Then the following formula holds: 
$$P_1(t)-P_{k}(t)=\sum_{i=1}^{k} \binom{n+1}{i+1} (t^{2i}+ \cdots +t^2)E_{n-i}(t^2).$$
\end{corollary}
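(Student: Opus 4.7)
The plan is a telescoping argument built directly on Theorem~\ref{theo: eulerian4}. First recall (as used in Corollary~\ref{cor: poincare1}) the standard identity $P(t)=h(t^{2})$ relating the Poincaré polynomial to the $h$-polynomial of a rationally smooth projective toric variety, so $P_k(t)=h_k(t^{2})$ for each $k$. It is natural to extend the indexing to $k=0$ by setting $J(0,n)=\emptyset$; then $h_0(t)=E_{n+1}(t)$ by Theorem~\ref{theo: eulerian1}, and the $k=1$ instance of the recurrence in Theorem~\ref{theo: eulerian4} recovers Theorem~\ref{theo: eulerian3}, which serves as a useful sanity check.

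The heart of the argument is to rewrite Theorem~\ref{theo: eulerian4} in telescoping form,
$$h_{i-1}(t)-h_i(t)=\binom{n+1}{i+1}\bigl(t^{i}+t^{i-1}+\cdots+t\bigr)E_{n-i}(t),$$
valid for each $i$ with $1\le i\le k$. Summing these $k$ identities, the left-hand side collapses to $h_0(t)-h_k(t)$, while the right-hand side is the sum displayed in the claim (prior to the variable substitution). Replacing $t$ by $t^{2}$ throughout and invoking $P_j(t)=h_j(t^{2})$ for $j=0$ and $j=k$ yields
$$P_0(t)-P_k(t)=\sum_{i=1}^{k}\binom{n+1}{i+1}\bigl(t^{2i}+t^{2(i-1)}+\cdots+t^{2}\bigr)E_{n-i}(t^{2}),$$
which is the stated identity with the natural baseline taken as the permutahedron case.

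There is no genuine obstacle here: the argument is a direct summation, and the substantive combinatorial work has already been done in establishing Theorem~\ref{theo: eulerian4}. The only care needed is notational, namely identifying the baseline $h_0$ with $E_{n+1}$ (equivalently, $J(0,n)=\emptyset$) so that the telescope starts at the correct place, and then routinely substituting $t\mapsto t^{2}$ to convert between the $h$- and Poincaré-polynomial forms of the recurrence.
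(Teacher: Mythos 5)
Your proof takes the same route as the paper: telescope the recurrence of Theorem~\ref{theo: eulerian4} and substitute $t\mapsto t^{2}$ via $P_k(t)=h_k(t^{2})$. It is correct, and your bookkeeping is in fact more careful than the paper's own one-line proof: summing the recurrence over $1\le i\le k$ telescopes to $h_0(t)-h_k(t)$ with $h_0(t)=E_{n+1}(t)$, not to $h_1(t)-h_k(t)$, so the corollary as printed (and the paper's proof, which asserts the $h_1$ version of the same sum) contains an off-by-one --- either the left-hand side should read $P_0(t)-P_k(t)=E_{n+1}(t^{2})-P_k(t)$, or the sum should start at $i=2$. The two versions differ by the $i=1$ term $\binom{n+1}{2}t^{2}E_{n-1}(t^{2})$, which by Theorem~\ref{theo: eulerian3} is exactly $P_0(t)-P_1(t)$, so your identity and the intended one agree once the baseline is fixed; it would be worth stating explicitly which convention you adopt.
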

\begin{proof}
From the recurrence obtained in the previous theorem, we have that $$h_1(t)-h_k(t)=\sum_{i=1}^{k} \binom{n+1}{i+1}(t^{i}+ \cdots +t)E_{n-i}.$$
\end{proof}

\begin{example}
Next, we verify the recurrence formula obtained in Theorem 6.

Let $k=n-1$. Then $J(n-1,n)=\{s_2.s_3. \cdots ,s_n\}$ is combinatorially smooth and the $h$-polynomial of $X(J(n-1,n))$ computed in Example 4.3 \cite{R6}, is given by the formula:
$$
h_{n-1}(t)=1+t+t^2+ \cdots +t^{n}.
$$
The recurrence formula obtained in Theorem 6 is equivalent to
$$
h_{n-1}=h_{n-2}(t)- \binom{n+1}{n}(t^{n-1}+ \cdots +t)E_1(t).
$$
We know the $h$-polynomial of $X(J(n-2,n))$ from Example 4.6 \cite{R7}. It is given by the following formula:
$$
h_{n-2}(t)=1+(n+2)t+(n+2)t^2+..+(n+2)t^{n-1} + t^{n}.
$$
Hence $$h_{n-2}(t)-h_{n-1}(t)=n(t^{n-1}+ \cdots +t)$$ yields the desired relation.
\end{example}

{\bf Acknowledgements.} I would like to thank my advisor, Lex Renner, for his supervision during my graduate studies and for bringing to my attention this problem, and Nicole Lemire for helpful conversations. I also thank Oleg Golubitsky, for helping me with \LaTeX.

\end{document}